\setlist[enumerate,1]{label={(\arabic*)},itemsep=\parskip} 
\setlist[itemize,1]{itemsep=\parskip} 
\newlist{thmlist}{enumerate}{2}
\setlist[thmlist,1]{label={\em(\roman*)},ref={(\roman*)},%
  itemsep=\parskip,leftmargin=*,align=left}
\setlist[thmlist,2]{label={\em(\alph*)},ref={(\alph*)},%
  itemsep=\parskip,leftmargin=*,align=left,topsep=0.1cm}
\newlist{remlist}{enumerate}{2}
\setlist[remlist,1]{label={(\roman*)},ref={(\roman*)},itemsep=\parskip,%
  leftmargin=*,align=left}
\setlist[remlist,2]{label={(\alph*)},ref={(\alph*)},itemsep=\parskip,%
  leftmargin=*,align=left,topsep=0.1cm}
\let\c@equation\c@subsubsection
\newtheorem{cor}[subsubsection]{Corollary}
\newtheorem{lem}[subsubsection]{Lemma}
\newtheorem{thm}[subsubsection]{Theorem}
\newtheorem{claim}[subsubsection]{Claim}
\newtheorem*{claim*}{Claim}
\theoremstyle{definition}
\newtheorem{rem}[subsubsection]{Remark}
\renewcommand{\eqref}[1]{(\ref{#1})}
\newcommand{\nc}{\newcommand}
\nc{\renc}{\renewcommand}
\nc{\ssec}{\subsection}
\nc{\sssec}{\subsubsection}
\nc{\on}{\operatorname}
\nc{\term}[1]{#1\xspace}
\DeclareMathSymbol{A}{\mathalpha}{operators}{`A}
\DeclareMathSymbol{B}{\mathalpha}{operators}{`B}
\DeclareMathSymbol{C}{\mathalpha}{operators}{`C}
\DeclareMathSymbol{D}{\mathalpha}{operators}{`D}
\DeclareMathSymbol{E}{\mathalpha}{operators}{`E}
\DeclareMathSymbol{F}{\mathalpha}{operators}{`F}
\DeclareMathSymbol{G}{\mathalpha}{operators}{`G}
\DeclareMathSymbol{H}{\mathalpha}{operators}{`H}
\DeclareMathSymbol{I}{\mathalpha}{operators}{`I}
\DeclareMathSymbol{J}{\mathalpha}{operators}{`J}
\DeclareMathSymbol{K}{\mathalpha}{operators}{`K}
\DeclareMathSymbol{L}{\mathalpha}{operators}{`L}
\DeclareMathSymbol{M}{\mathalpha}{operators}{`M}
\DeclareMathSymbol{N}{\mathalpha}{operators}{`N}
\DeclareMathSymbol{O}{\mathalpha}{operators}{`O}
\DeclareMathSymbol{P}{\mathalpha}{operators}{`P}
\DeclareMathSymbol{Q}{\mathalpha}{operators}{`Q}
\DeclareMathSymbol{R}{\mathalpha}{operators}{`R}
\DeclareMathSymbol{S}{\mathalpha}{operators}{`S}
\DeclareMathSymbol{T}{\mathalpha}{operators}{`T}
\DeclareMathSymbol{U}{\mathalpha}{operators}{`U}
\DeclareMathSymbol{V}{\mathalpha}{operators}{`V}
\DeclareMathSymbol{W}{\mathalpha}{operators}{`W}
\DeclareMathSymbol{X}{\mathalpha}{operators}{`X}
\DeclareMathSymbol{Y}{\mathalpha}{operators}{`Y}
\DeclareMathSymbol{Z}{\mathalpha}{operators}{`Z}
\nc{\sA}{\ensuremath{\mathcal{A}}\xspace}
\nc{\sB}{\ensuremath{\mathcal{B}}\xspace}
\nc{\sC}{\ensuremath{\mathcal{C}}\xspace}
\nc{\sD}{\ensuremath{\mathcal{D}}\xspace}
\nc{\sE}{\ensuremath{\mathcal{E}}\xspace}
\nc{\sF}{\ensuremath{\mathcal{F}}\xspace}
\nc{\sG}{\ensuremath{\mathcal{G}}\xspace}
\nc{\sH}{\ensuremath{\mathcal{H}}\xspace}
\nc{\sI}{\ensuremath{\mathcal{I}}\xspace}
\nc{\sJ}{\ensuremath{\mathcal{J}}\xspace}
\nc{\sK}{\ensuremath{\mathcal{K}}\xspace}
\nc{\sL}{\ensuremath{\mathcal{L}}\xspace}
\nc{\sM}{\ensuremath{\mathcal{M}}\xspace}
\nc{\sN}{\ensuremath{\mathcal{N}}\xspace}
\nc{\sO}{\ensuremath{\mathcal{O}}\xspace}
\nc{\sP}{\ensuremath{\mathcal{P}}\xspace}
\nc{\sQ}{\ensuremath{\mathcal{Q}}\xspace}
\nc{\sR}{\ensuremath{\mathcal{R}}\xspace}
\nc{\sS}{\ensuremath{\mathcal{S}}\xspace}
\nc{\sT}{\ensuremath{\mathcal{T}}\xspace}
\nc{\sU}{\ensuremath{\mathcal{U}}\xspace}
\nc{\sV}{\ensuremath{\mathcal{V}}\xspace}
\nc{\sW}{\ensuremath{\mathcal{W}}\xspace}
\nc{\sX}{\ensuremath{\mathcal{X}}\xspace}
\nc{\sY}{\ensuremath{\mathcal{Y}}\xspace}
\nc{\sZ}{\ensuremath{\mathcal{Z}}\xspace}
\nc{\bA}{\ensuremath{\mathbf{A}}\xspace}
\nc{\bB}{\ensuremath{\mathbf{B}}\xspace}
\nc{\bC}{\ensuremath{\mathbf{C}}\xspace}
\nc{\bD}{\ensuremath{\mathbf{D}}\xspace}
\nc{\bE}{\ensuremath{\mathbf{E}}\xspace}
\nc{\bF}{\ensuremath{\mathbf{F}}\xspace}
\nc{\bG}{\ensuremath{\mathbf{G}}\xspace}
\nc{\bH}{\ensuremath{\mathbf{H}}\xspace}
\nc{\bI}{\ensuremath{\mathbf{I}}\xspace}
\nc{\bJ}{\ensuremath{\mathbf{J}}\xspace}
\nc{\bK}{\ensuremath{\mathbf{K}}\xspace}
\nc{\bL}{\ensuremath{\mathbf{L}}\xspace}
\nc{\bM}{\ensuremath{\mathbf{M}}\xspace}
\nc{\bN}{\ensuremath{\mathbf{N}}\xspace}
\nc{\bO}{\ensuremath{\mathbf{O}}\xspace}
\nc{\bP}{\ensuremath{\mathbf{P}}\xspace}
\nc{\bQ}{\ensuremath{\mathbf{Q}}\xspace}
\nc{\bR}{\ensuremath{\mathbf{R}}\xspace}
\nc{\bS}{\ensuremath{\mathbf{S}}\xspace}
\nc{\bT}{\ensuremath{\mathbf{T}}\xspace}
\nc{\bU}{\ensuremath{\mathbf{U}}\xspace}
\nc{\bV}{\ensuremath{\mathbf{V}}\xspace}
\nc{\bW}{\ensuremath{\mathbf{W}}\xspace}
\nc{\bX}{\ensuremath{\mathbf{X}}\xspace}
\nc{\bY}{\ensuremath{\mathbf{Y}}\xspace}
\nc{\bZ}{\ensuremath{\mathbf{Z}}\xspace}
\nc{\dA}{\ensuremath{\mathds{A}}\xspace}
\nc{\dB}{\ensuremath{\mathds{B}}\xspace}
\nc{\dC}{\ensuremath{\mathds{C}}\xspace}
\nc{\dD}{\ensuremath{\mathds{D}}\xspace}
\nc{\dE}{\ensuremath{\mathds{E}}\xspace}
\nc{\dF}{\ensuremath{\mathds{F}}\xspace}
\nc{\dG}{\ensuremath{\mathds{G}}\xspace}
\nc{\dH}{\ensuremath{\mathds{H}}\xspace}
\nc{\dI}{\ensuremath{\mathds{I}}\xspace}
\nc{\dJ}{\ensuremath{\mathds{J}}\xspace}
\nc{\dK}{\ensuremath{\mathds{K}}\xspace}
\nc{\dL}{\ensuremath{\mathds{L}}\xspace}
\nc{\dM}{\ensuremath{\mathds{M}}\xspace}
\nc{\dN}{\ensuremath{\mathds{N}}\xspace}
\nc{\dO}{\ensuremath{\mathds{O}}\xspace}
\nc{\dP}{\ensuremath{\mathds{P}}\xspace}
\nc{\dQ}{\ensuremath{\mathds{Q}}\xspace}
\nc{\dR}{\ensuremath{\mathds{R}}\xspace}
\nc{\dS}{\ensuremath{\mathds{S}}\xspace}
\nc{\dT}{\ensuremath{\mathds{T}}\xspace}
\nc{\dU}{\ensuremath{\mathds{U}}\xspace}
\nc{\dV}{\ensuremath{\mathds{V}}\xspace}
\nc{\dW}{\ensuremath{\mathds{W}}\xspace}
\nc{\dX}{\ensuremath{\mathds{X}}\xspace}
\nc{\dY}{\ensuremath{\mathds{Y}}\xspace}
\nc{\dZ}{\ensuremath{\mathds{Z}}\xspace}
\nc{\bbA}{\ensuremath{\mathbb{A}}\xspace}
\nc{\bbB}{\ensuremath{\mathbb{B}}\xspace}
\nc{\bbC}{\ensuremath{\mathbb{C}}\xspace}
\nc{\bbD}{\ensuremath{\mathbb{D}}\xspace}
\nc{\bbE}{\ensuremath{\mathbb{E}}\xspace}
\nc{\bbF}{\ensuremath{\mathbb{F}}\xspace}
\nc{\bbG}{\ensuremath{\mathbb{G}}\xspace}
\nc{\bbH}{\ensuremath{\mathbb{H}}\xspace}
\nc{\bbI}{\ensuremath{\mathbb{I}}\xspace}
\nc{\bbJ}{\ensuremath{\mathbb{J}}\xspace}
\nc{\bbK}{\ensuremath{\mathbb{K}}\xspace}
\nc{\bbL}{\ensuremath{\mathbb{L}}\xspace}
\nc{\bbM}{\ensuremath{\mathbb{M}}\xspace}
\nc{\bbN}{\ensuremath{\mathbb{N}}\xspace}
\nc{\bbO}{\ensuremath{\mathbb{O}}\xspace}
\nc{\bbP}{\ensuremath{\mathbb{P}}\xspace}
\nc{\bbQ}{\ensuremath{\mathbb{Q}}\xspace}
\nc{\bbR}{\ensuremath{\mathbb{R}}\xspace}
\nc{\bbS}{\ensuremath{\mathbb{S}}\xspace}
\nc{\bbT}{\ensuremath{\mathbb{T}}\xspace}
\nc{\bbU}{\ensuremath{\mathbb{U}}\xspace}
\nc{\bbV}{\ensuremath{\mathbb{V}}\xspace}
\nc{\bbW}{\ensuremath{\mathbb{W}}\xspace}
\nc{\bbX}{\ensuremath{\mathbb{X}}\xspace}
\nc{\bbY}{\ensuremath{\mathbb{Y}}\xspace}
\nc{\bbZ}{\ensuremath{\mathbb{Z}}\xspace}
\nc{\mrm}[1]{\ensuremath{\mathrm{#1}}\xspace}
\nc{\mbf}[1]{\ensuremath{\mathbf{#1}}\xspace}
\nc{\mcal}[1]{\ensuremath{\mathcal{#1}}\xspace}
\nc{\msc}[1]{\ensuremath{\mathscr{#1}}\xspace}
\renc{\bar}[1]{\overline{#1}}
\nc{\sub}{\subset}
\nc{\too}{\longrightarrow}
\nc{\hook}{\hookrightarrow}
\nc*{\hooklongrightarrow}{\ensuremath{\lhook\joinrel\relbar\joinrel\rightarrow}}
\nc{\hooklong}{\hooklongrightarrow}
\nc{\twoheadlongrightarrow}{\relbar\joinrel\twoheadrightarrow}
\nc{\shiso}{\approx}
\nc{\isoto}{\xrightarrow{\sim}}
\nc{\isofrom}{\xleftarrow{\sim}}
\renc{\ge}{\geqslant}
\renc{\le}{\leqslant}
\renc{\geq}{\geqslant}
\renc{\leq}{\leqslant}
\nc{\id}{\mathrm{id}}
\DeclareMathOperator{\Hom}{\mathrm{Hom}}
\nc{\uHom}{\underline{\smash{\Hom}}}
\DeclareMathOperator{\End}{\mathrm{End}}
\nc{\Pre}{\mathrm{PSh}{}}
\nc{\Shv}{\mathrm{Shv}{}}
\nc{\uEnd}{\underline{\smash{\End}}}
\renc{\lim}{\operatorname*{lim}}
\nc{\colim}{\operatorname*{colim}}
\nc{\Cofib}{\on{Cofib}}
\nc{\Fib}{\on{Fib}}
\nc{\initial}{\varnothing}
\nc{\op}{\mathrm{op}}
\renc{\coprod}{\sqcup}
\nc{\bDelta}{\mbf{\Delta}}
\nc{\DM}{\mbf{DM}}
\nc{\eff}{\mathrm{eff}}
\nc{\veff}{\mathrm{veff}}
\nc{\cyc}{{\mrm{cyc}}}
\nc{\corr}{{\on{corr}}}
\nc{\ft}{\mrm{ft}}
\nc{\flf}{\mrm{flf}}
\nc{\fet}{{\mrm{f\acute et}}}
\nc{\fsyn}{{\mrm{fsyn}}}
\nc{\syn}{{\mrm{syn}}}
\nc{\lci}{{\mrm{lci}}}
\nc{\Perf}{\mbf{Perf}}
\nc{\perf}{\mrm{perf}}
\nc{\oblv}{\mrm{oblv}}
\nc{\exact}{\on{exact}}
\nc{\F}{{\on{F}}}
\nc{\clopen}{{\mrm{clopen}}}
\nc{\B}{\mrm{B}}
\nc{\D}{\mrm{D}}
\nc{\Fin}{\on{Fin}}
\nc{\fin}{\mrm{fin}}
\nc{\Cut}{\on{Cut}}
\nc{\Cart}{\on{Cart}}
\nc{\pairs}{\mathsf{pairs}}
\nc{\Pairs}{\mathrm{Pair}}
\nc{\Trip}{\mathrm{Trip}}
\nc{\Lab}{\mathrm{Lab}}
\nc{\SL}{\mathrm{SL}}
\nc{\coCart}{\mathrm{coCart}}
\nc{\RKE}{\mathrm{RKE}}
\nc{\strict}{\mathrm{strict}}
\nc{\Emb}{\mathrm{Emb}}
\nc{\Split}{\mathrm{Split}}
\nc{\Set}{\mathrm{Set}}
\nc{\sSets}{\mathrm{sSets}}
\nc{\pb}{\mathrm{pb}}
\nc{\fib}{\mathrm{fib}}
\nc{\diff}{\mrm{diff}}
\nc{\gp}{\mrm{gp}}
\nc{\map}{\mrm{map}}
\nc{\mgp}{\mrm{mot-gp}}
\nc{\FSyn}{\mrm{FSyn}}
\nc{\FEt}{\mrm{FEt}}
\nc{\Spc}{\mrm{Spc}}
\nc{\Ob}{\mrm{Ob}}
\nc{\Spt}{\mrm{Spt}}
\nc{\T}{\bT}
\nc{\suspinf}{\Sigma^\infty}
\nc{\h}{\mrm{h}}
\nc{\uhom}{\underline{\mathrm{Hom}}}
\nc{\umap}{\underline{\mathrm{Maps}}}
\renc{\H}{\bH}
\nc{\Einfty}{{\sE_\infty}}
\nc{\Eone}{{\sE_1}}
\nc{\Stab}{\mrm{Stab}}
\nc{\lax}{{\mrm{lax}}}
\nc{\cocart}{{\mrm{cocart}}}
\nc{\Sch}{\on{Sch}}
\nc{\Fr}{\on{Fr}}
\nc{\A}{\mathbf{A}}
\nc{\N}{\mathbf{N}}
\nc{\Z}{\mathbf{Z}}
\nc{\Q}{\mathbf{Q}}
\nc{\Oo}{\mathcal{O}} 
\nc{\Fscr}{\mathcal{F}}
\nc{\Gscr}{\mathcal{G}}
\nc{\Ll}{\mathcal{L}} 
\nc{\Mm}{\mathcal{M}} 
\nc{\mm}{\mathrm{m}} 
\nc{\K}{\mrm{K}} 
\nc{\W}{\mrm{W}} 
\nc{\red}{{\on{red}}}
\nc{\Voev}{{\on{Voev}}}
\nc{\Corr}{\mrm{Corr}}
\nc{\Span}{\mathbf{Corr}}
\nc{\Gap}{\mrm{Gap}}
\nc{\Corrfr}{\Corr^{\fr}}
\nc{\Corrvfr}{\Corr^{\Vfr}}
\nc{\Spec}{\on{Spec}}
\nc{\Sm}{\on{Sm}}
\nc{\Gm}{\mathbf{G}_{\on{m}}}
\renc{\P}{\bP}
\nc{\nis}{\mathrm{nis}}
\nc{\zar}{\mathrm{zar}}
\nc{\et}{\mathrm{\acute et}}
\nc{\all}{\mathrm{all}}
\nc{\fold}{\mathrm{fold}}
\nc{\Fun}{\mathrm{Fun}}
\nc{\Ho}{\mathrm{Ho}}
\nc{\Segal}{\mathrm{Segal}}
\nc{\Mon}{\mrm{Mon}{}}
\nc{\Ab}{\mrm{Ab}}
\nc{\Sh}{\on{Sh}}
\nc{\M}{\mrm{M}}
\nc{\Lhtp}{L_{\A^1}}
\nc{\Lmot}{L_{\mrm{mot}}}
\nc{\mot}{\mrm{mot}}
\nc{\SH}{\mbf{SH}}
\nc{\RR}{\mbf{R}}
\nc{\CC}{\mbf{C}}
\nc{\Mod}{\mbf{Mod}}
\nc{\QCoh}{\mbf{QCoh}}
\nc{\MonUnit}{\mbf{1}}
\nc{\tr}{\on{tr}}
\nc{\cotr}{\mrm{cotr}}
\nc{\vop}{\mrm{vop}}
\nc{\fr}{{\on{fr}}}
\nc{\Ar}{\mrm{Ar}}
\nc{\Vfr}{\on{Vfr}}
\nc{\frdiff}{{\on{frdiff}}}
\nc{\frGys}{\on{frGys}}
\nc{\SHfr}{\SH^{\fr}}
\nc{\SHfrdiff}{\SH^{\frdiff}}
\nc{\SHfrGys}{\SH^{\frGys}}
\nc{\InftyCat}{(\mathrm{\infty,1)\textnormal{-}Cat}}
\nc{\TriCat}{\mathrm{TriCat}}
\nc{\oneCat}{\mathrm{1\textnormal{-}Cat}}
\nc{\Cat}{\mathrm{Cat}}
\nc{\Th}{\on{Th}}
\nc{\CMon}{\mrm{CMon}{}}
\nc{\CAlg}{\mrm{CAlg}{}}
\nc{\MGL}{\mrm{MGL}}
\nc{\Seg}{\mrm{Seg}{}}
\nc{\GW}{\mrm{GW}{}}
\nc{\Tw}{\mrm{Tw}}
\nc{\sslash}{/\mkern-6mu/}
\nc{\PrL}{\mrm{Pr}^\mrm{L}}
\nc{\PrR}{\mrm{Pr}^\mrm{R}}
\nc{\pr}{\mrm{pr}}
\let\phi\varphi
\nc\efr{\mrm{efr}}
\nc\nfr{\mrm{nfr}}
\nc\dfr{\mrm{fr}}
\nc\tfr{\mrm{tfr}}
\nc\Vect{\mrm{Vect}}
\nc\sVect{\mrm{sVect}}
\nc{\fix}{\mrm{fix}}
\nc{\ho}{\mrm{h}}
\nc\Mfd{\mrm{Mfd}}
\nc{\PSh}{\mrm{PSh}}
\nc{\hzmw}{H \tilde\Z{}}
\nc{\Cor}{\mrm{Cor}{}}
\nc{\cormw}{\mrm{\widetilde{Cor}}{}}
\nc{\Chw}{\mrm{\widetilde{CH}}{}}
\nc{\Ex}{\mrm{Ex}}
\nc{\BM}{\mrm{BM}}
\nc{\Pic}{\mrm{Pic}}
\nc{\Br}{\mrm{Br}}
\nc{\pur}{\mathfrak p}
\nc{\angles}[1]{\langle #1\rangle}
\nc{\inv}[1]{[\tfrac{1}{#1}]}
\nc{\pinv}{\inv{p}}
\nc{\cinv}{\inv{p}}
\nc{\Sph}{\on{Sph}}
\nc{\KGL}{\mrm{KGL}}
\nc{\KH}{\mrm{KH}}
\nc{\Flag}{\mrm{Flag}}
\nc{\Pro}{\mrm{Pro}}
\nc{\Frac}{\mrm{Frac}}
\nc{\arc}{\mrm{arc}}
\nc{\rarc}{\mrm{rarc}}
\nc{\cdarc}{\mrm{cdarc}}
\nc{\vv}{\mrm{v}}
\nc{\rv}{\mrm{rv}}
\nc{\cdv}{\mrm{cdv}}
\nc{\hh}{\mrm{h}}
\nc{\cdh}{\mrm{cdh}}
\nc{\rh}{\mathrm{rh}}
\nc{\Et}{\mathrm{Et}}
\nc{\Nis}{\mathrm{Nis}}
\nc{\Zar}{\mathrm{Zar}}
\nc{\cdp}{\mathrm{cdp}}
\nc{\RZ}{\mathrm{RZ}}
\nc{\qcqs}{\mathrm{qcqs}}
\nc{\aff}{\mathrm{aff}}
\nc{\cl}{\mathrm{cl}}
\nc{\Val}{\mathrm{Val}}
\nc{\GFin}{\mathrm{GFin}{}}
\nc{\Proj}{\mathrm{Proj}}
\nc{\inftyCat}{\term{$\infty$-category}}
\nc{\inftyCats}{\term{$\infty$-categories}}
\nc{\inftyOneCat}{\term{$(\infty,1)$-category}}
\nc{\inftyOneCats}{\term{$(\infty,1)$-categories}}
\nc{\inftyGrpd}{\term{$\infty$-groupoid}}
\nc{\inftyGrpds}{\term{$\infty$-groupoids}}
\nc{\inftyTop}{\term{$\infty$-topos}}
\nc{\inftyTops}{\term{$\infty$-toposes}}
\nc{\inftyTwoCat}{\term{$(\infty,2)$-category}}
\nc{\inftyTwoCats}{\term{$(\infty,2)$-categories}}
\title{}
\title{A Descent view on Mitchell's theorem}
\author[E. Elmanto]{Elden Elmanto}
\address{Department of Mathematics\\
Harvard University\\
1 Oxford St.\
Cambridge, MA 02138\\
USA}
\email{\href{mailto:elmanto@math.harvard.edu}{elmanto@math.harvard.edu}}
\urladdr{\url{https://www.eldenelmanto.com/}}
\author[D. Nardin]{Denis Nardin}
\address{Fakult\"at f\"ur Mathematik\\
Universit\"at Regensburg\\
93040 Regensburg\\
Germany}
\email{\href{mailto:denis.nardin@ur.de}{denis.nardin@ur.de}}
\urladdr{\url{https://homepages.uni-regensburg.de/~nad22969/}}
\author[L. Yang]{Lucy Yang}
\address{Department of Mathematics\\
Harvard University\\
1 Oxford St.\
Cambridge, MA 02138\\
USA}
\email{\href{mailto:lyang@math.harvard.edu}{lyang@math.harvard.edu}}
\begin{document}

\def\comp{\wedge}

\maketitle

\begin{abstract} 
    In this short note, we give a new proof of Mitchell's theorem that $L_{T(n)}K(\bZ) \simeq 0$ for $n \geq 2$. 
    Instead of reducing the problem to delicate representation theory, we use recently established hyperdescent technology for chromatically-localized algebraic $K$-theory. 
    \end{abstract}

\section{Introduction and background}

 In this note, we give an alternate proof of the following result:

\begin{thm}[Mitchell]\label{thm:mitchell} For all primes $p$ and $n \geq 2$, $K(n)_*K(\bZ)  = 0$.
\end{thm}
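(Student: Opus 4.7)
The plan is to leverage recently established hyperdescent results for chromatically localized algebraic $K$-theory to reduce Mitchell's statement to computations on local rings and residue fields that are classical. Since $K(n)$ is $T(n)$-local, it suffices to prove the stronger vanishing $L_{T(n)} K(\bZ) \simeq 0$ for $n \geq 2$, and I will focus on this from now on.

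The first step is to assemble an arithmetic fracture square for $L_{T(n)} K$. For each prime $p$, one expects
\[
L_{T(n)} K(\bZ) \simeq L_{T(n)} K(\bZ[1/p]) \times_{L_{T(n)} K(\bQ_p)} L_{T(n)} K(\bZ_p),
\]
which fails in general for $K$-theory but should hold after $T(n)$-localization: the defects obstructing the analogous statement integrally, measured by cdh or arc descent, are precisely what is killed by $L_{T(n)}$ for $n \geq 2$.

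I would then compute each corner. By Gabber-style rigidity, valid for $L_{T(n)} K$ as it is nilinvariant on henselian pairs, one has $L_{T(n)} K(\bZ_p) \simeq L_{T(n)} K(\mathbf{F}_p)$. Quillen's computation exhibits $K(\mathbf{F}_p)$ as a spectrum of chromatic height at most one, so $L_{T(n)} K(\mathbf{F}_p) \simeq 0$ for $n \geq 2$. For the remaining corners $L_{T(n)} K(\bQ_p)$ and $L_{T(n)} K(\bZ[1/p])$, I would apply Galois or \'etale hyperdescent (after Clausen--Mathew--Naumann--Noel) to pass to algebraic closures. Suslin's rigidity then identifies the result with the $T(n)$-localized $K$-theory of an algebraic closure of a finite field, again of chromatic height at most one, hence zero for $n \geq 2$.

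The main obstacle, and the crux of the paper, is the first step: establishing the arithmetic fracture square for $L_{T(n)} K$ applied to $\bZ$ with sufficient precision, including verifying that the appropriate descent spectral sequence degenerates or converges as expected. This is exactly where the hyperdescent technology enters. Once the descent input is in place, the remaining corners collapse to classical computations of $K$-theory of finite or algebraically closed fields, and Mitchell's theorem follows formally, bypassing the delicate representation theory of the original proof.
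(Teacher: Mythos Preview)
Your overall instinct---reduce Mitchell's theorem to local computations via descent, then invoke Gabber/Suslin rigidity and Quillen's calculation---matches the paper's philosophy. But two aspects of your outline diverge from the paper, and one of them is a genuine error.

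First, the arithmetic fracture square is an unnecessary detour. The paper instead uses Quillen's localization/d\'evissage cofiber sequence
\[
K(\bF_p) \to K(\bZ) \to K(\bZ[1/p]),
\]
together with $L_{T(n)}K(\bF_p) \simeq L_{T(n)} H\bZ_p \simeq 0$ for $n\ge 2$, to reduce directly to $L_{T(n)}K(\bZ[1/p])$. No fracture square, no $\bZ_p$ corner, no convergence issues. From there the paper applies \'etale hyperdescent on $\Et_{\bZ[1/p]}$ (this is Clausen--Mathew, which genuinely needs Bloch--Kato, not just the finite-cover descent of CMNN) and checks vanishing on stalks: strictly henselian local rings with residue characteristic $\ell \neq p$, where Gabber and Suslin rigidity legitimately apply to reduce to $ku$.

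Second, and more seriously, your invocation of ``Gabber-style rigidity'' to conclude $L_{T(n)}K(\bZ_p) \simeq L_{T(n)}K(\bF_p)$ is wrong as stated. Gabber rigidity gives $K(R;\bZ/m) \simeq K(\kappa;\bZ/m)$ only when $m$ is invertible in $R$; for $R = \bZ_p$ the residue characteristic is $p$, which is exactly the prime at which $T(n)$ lives, so the hypothesis fails. Nilinvariance of $L_{T(n)}K$ does not help either, since $\bZ_p \to \bF_p$ is a henselian pair but not a nilpotent thickening. The paper does prove $L_{T(n)}K(\bZ_p)\simeq 0$ separately as a warm-up, but by an entirely different route: pass to $\sO_C$ (where one compares with $ku$ via Suslin and a perfectoid argument), write $K(\sO_C;\bZ_p)$ as a filtered colimit over finite extensions $\sO_E/\bZ_p$, extract a single finite flat cover on which $L_{T(n)}K$ vanishes, and then descend via CMNN. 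None of this is needed for the global theorem once you use the localization sequence.
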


The proof Theorem~\ref{thm:mitchell} given in \cite{Mit90}  is relatively self-contained and depends on showing that the unit map $\1 \rightarrow K(\bZ)$ factors through the ``image of $J$." This factoring relies on delicate representation theory by way of the Barratt-Priddy-Quillen theorem. Since the latter spectrum is known to be acyclic for the Morava's $K(n)$ for $n \geq 2$, the result follows. The value of the present note is that it locates the proof in its natural environment --- Rognes' redshift philosophy in algebraic $K$-theory. 

The starting point of Theorem~\ref{thm:mitchell} is Thomason's seminal result that $K(1)$-localized algebraic $K$-theory satisfies \'etale descent \cite{aktec}. Combined with the rigidity theorems of Suslin \cite{Sus83} and Gabber \cite{Gab92}, one concludes that $K(1)$-local algebraic $K$-theory is, more or less, topological $K$-theory; we also refer the reader to \cite{eventually} for further elaboration on this point of view.

One can view Thomason's theorem as a ``Bott-asymptotic" version of a more refined statement --- the Quillen-Lichtenbaum conjecture (now the Voevodsky-Rost theorem \cite{VV,Voevodsky:2008}) which asserts that algebraic and \'etale $K$-theory agrees in high enough degrees. By analogy with the Quillen-Lichtenbaum conjectures, Rognes has formulated the idea that taking algebraic $K$-theory increases ``chromatic complexity" --- demonstrating a ``redshift"; we refer the reader to \cite{rognes-msri} for a discussion.
  
In line with this ideology, Thomason's theorem is then viewed as saying that taking algebraic $K$-theory of a discrete commutative ring (which is $K(1)$-acyclic) yields an interesting answer $K(1)$-locally. At the next height, results of Ausoni-Rognes \cite{ausoni, ausoni-rognes} who confirmed that $v_2$ acts invertibly on $K(K(\bC)^{\comp}_p) \otimes V(1)$ where $V(1)$ is a type $2$ complex, hence is interesting $K(2)$-locally. 

We can view Mitchell's result anachronistically as a demonstration of the strictness of redshift: while the 2-fold algebraic $K$-theory of a discrete ring is interesting $K(2)$-locally, the algebraic $K$-theory thereof itself is not. 

The value of our proof, if there is one, is that it is born in the same spirit as Thomason's results: we confirm Mitchell's vanishing by way of \'etale hyperdescent. We believe that proving the result this way places it within its proper context, at the cost of using more technology. 



%

\subsection{Acknowledgements} We would like to thank Akhil Mathew and John Rognes for useful discussions and Gabriel Angelini-Knoll, Ben Antieau, Markus Land, Lennart Meier and Georg Tamme for comments on an earlier draft.
\section{On Mitchell's theorem}

\ssec{A $p$-adic version of Mitchell's theorem} 
As a warm-up, we first give a very simple proof of the $p$-adic version of Mitchell's theorem. So fix a prime $p > 0$; here our $T(n)$-localizations will be at this implicit prime.

\begin{thm} \label{thm:baby-mitchell} For $n \geq 2$, we have that $L_{T(n)}K\left(\bZ_p\right) \simeq 0$. 
\end{thm}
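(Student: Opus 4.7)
The plan is to reduce directly to Quillen's theorem on $K(\bF_p)$ via the recently established invariance of $L_{T(n)} K$ under $p$-adic nil- or henselian-thickenings for $n \geq 2$.

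\textbf{Reduction to the residue field.} The pair $(\bZ_p, p\bZ_p)$ is henselian with quotient $\bF_p$. The relevant hyperdescent technology --- due to Clausen--Mathew--Naumann--Noel, Land--Mathew--Meier--Tamme, and others --- implies that for $n \geq 2$, the reduction $\bZ_p \twoheadrightarrow \bF_p$ induces an equivalence
\[
L_{T(n)} K(\bZ_p) \;\simeq\; L_{T(n)} K(\bF_p).
\]
Equivalently, writing $\bZ_p = \lim_k \bZ/p^k$ and using nil-invariance of $L_{T(n)} K$ at each level ($\bZ/p^k \to \bF_p$) together with its continuity under the pro-system, one reduces to the residue field.

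\textbf{Vanishing at the residue field.} By Quillen's computation, $K_*(\bF_p)$ is $\bZ$ in degree $0$, cyclic of order $p^r - 1$ in degree $2r - 1$, and zero in positive even degrees --- all positive-degree groups have order prime to $p$. Hence $K(\bF_p)^\wedge_p \simeq H\bZ_p$, which is $T(n)$-acyclic for every $n \geq 1$, since $H\bZ$ admits no $v_n$-self-map. Combining:
\[
L_{T(n)} K(\bZ_p) \;\simeq\; L_{T(n)} K(\bF_p) \;\simeq\; L_{T(n)} H\bZ_p \;\simeq\; 0.
\]

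The main obstacle is the first step: establishing the pro-nil / henselian invariance of $L_{T(n)} K$ for $n \geq 2$. This is the content of the ``recently established hyperdescent technology''; notably the invariance fails at $n = 1$ (where Thomason's computation shows $L_{T(1)} K(\bZ_p)$ is nonzero while $L_{T(1)} K(\bF_p) \simeq 0$), making this step the ``redshift'' witness behind the $p$-adic form of Mitchell's theorem.
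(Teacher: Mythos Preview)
Your approach is different from the paper's and, as written, has a genuine gap in the first step.

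The paper argues by going \emph{up}: it shows $L_{T(n)}K(\sO_C)\simeq 0$ for $C=\widehat{\overline{\bQ_p}}$ (using that $K(\sO_C)\to K(C)$ is a $p$-adic equivalence and Suslin's identification $K(C)^{\wedge}_p\simeq ku^{\wedge}_p$), writes $K(\sO_C;\bZ_p)$ as a filtered colimit over rings of integers $\sO_E$ in finite extensions, extracts a single $\sO_E$ with $L_{T(n)}K(\sO_E)=0$, and then \emph{descends} to $\bZ_p$ along the finite flat map $\bZ_p\to\sO_E$ using CMNN. Descent is the only ``technology'' invoked.

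You instead go \emph{down} to $\bF_p$. The issue is that the equivalence $L_{T(n)}K(\bZ_p)\simeq L_{T(n)}K(\bF_p)$ for $n\ge 2$ is not what CMNN or LMMT provide. CMNN concerns finite flat (Galois) descent, not henselian or nil-invariance. LMMT purity asserts that $L_{T(n)}K$ inverts $L_n^{p,f}$-equivalences, but $\bZ_p\to\bF_p$ is \emph{not} such an equivalence: already at height~$0$ one has $L_{T(0)}H\bZ_p\simeq H\bQ_p\neq 0=L_{T(0)}H\bF_p$. Via DGM/Clausen--Mathew--Morrow your reduction is equivalent to the vanishing $L_{T(n)}TC(\bZ_p)=0$ for $n\ge 2$; but in this very paper that vanishing is obtained as a \emph{corollary} of Mitchell's theorem (using that $TC$ is an algebra over $K$ via the trace), not as an input. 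Invoking it here is circular unless you supply an independent argument, and none of the references you name contains one. Your own observation that the step fails at $n=1$ is exactly the warning sign: what singles out $n\ge 2$ is that $TC(\bZ_p)$ has chromatic height~$\le 1$, and establishing that is essentially the content of the theorem.
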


To begin, let $C$ be the completion of the algebraic closure of $\bQ_p$ and $\sO_C$ be its ring of integers which is an integral perfectoid ring. 


\begin{lem} \label{lem:tc} For $n \geq 2$, we have that $L_{T(n)}K\left(\sO_C\right) \simeq 0$. 
\end{lem}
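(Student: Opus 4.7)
The plan is to reduce the statement to $K$-theory of complete DVRs, where the chromatic height is classically understood, using rigidity for $T(n)$-local $K$-theory together with continuity.

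Set $\sO^{\mrm{alg}} := \bigcup_{L/\bQ_p \text{ finite}} \sO_L \subset \bar\bQ_p$, the henselian rank-one valuation subring whose residue field is $\bar\bF_p$. Its $p$-adic completion is $\sO_C$. By the rigidity theorem for $T(n)$-local $K$-theory along $p$-henselian pairs --- a consequence of the Clausen--Mathew--Morrow identification $L_{T(n)}K \simeq L_{T(n)}TC$ on $p$-complete rings combined with continuity of $TC$ along $p$-adic completion for $p$-henselian rings --- one has $L_{T(n)}K(\sO^{\mrm{alg}}) \simeq L_{T(n)}K(\sO_C)$ for every $n \ge 1$. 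Since $L_{T(n)}K$ commutes with filtered colimits of commutative rings, it therefore suffices to prove $L_{T(n)}K(\sO_L) \simeq 0$ for every finite extension $L/\bQ_p$ and every $n \ge 2$.

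Fix such an $L$ with residue field $k_L$. Quillen's localization theorem for the DVR $\sO_L$ furnishes a cofiber sequence
\[
    K(k_L) \too K(\sO_L) \too K(L).
\]
From Quillen's computation of $K_*(\bF_q)$ one reads $K(k_L)_p^\wedge \simeq H\bZ_p$, so $L_{T(n)}K(k_L) \simeq 0$ for all $n \ge 1$. On the other hand, the Hesselholt--Madsen computation of $K(L)_p^\wedge$ for $p$-adic local fields (building on Thomason) shows that this spectrum has chromatic height $\le 1$; in particular $L_{T(n)}K(L) \simeq 0$ for $n \ge 2$. Applied to the cofiber sequence above, this forces $L_{T(n)}K(\sO_L) \simeq 0$ for $n \ge 2$, finishing the proof.

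The principal obstacle is the first step, i.e.\ transferring the vanishing from $\sO^{\mrm{alg}}$ to its $p$-adic completion $\sO_C$: this is exactly the modern descent/rigidity technology the paper's introduction emphasizes. A more computational alternative would bypass the henselian approximation $\sO^{\mrm{alg}}$ altogether, directly invoking the Bhatt--Morrow--Scholze description of $TC(\sO_C)_p^\wedge$ in terms of prismatic cohomology of $A_{\mrm{inf}}(\sO_C)$ and extracting the chromatic height bound from the BMS motivic filtration; the route above is perhaps more in the ``descent spirit'' of the paper.
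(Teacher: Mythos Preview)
Your argument is correct but runs in the opposite direction from the paper's and leans on much heavier input. The paper's proof is two lines: $K(\sO_C)\to K(C)$ is a $p$-adic equivalence by an elementary perfectoid observation (\cite[Lemma~1.3.7]{hess-nik}), and $ku\to K(C)$ is a $p$-adic equivalence by Suslin rigidity for the algebraically closed field $C$; since $L_{T(n)}ku\simeq 0$ for $n\ge 2$, the lemma follows immediately. You instead first establish $L_{T(n)}K(\sO_L)\simeq 0$ for each finite $L/\bQ_p$ via the localization sequence and the full Hesselholt--Madsen computation of $K(L;\bZ_p)$, then pass to the colimit and $p$-complete. This inverts the paper's logic: the whole point of the lemma is that vanishing at $\sO_C$ is \emph{cheap}, and Theorem~\ref{thm:baby-mitchell} (vanishing for $\bZ_p$, hence for every $\sO_L$) is then \emph{deduced} from it by finite flat descent; your route essentially proves Theorem~\ref{thm:baby-mitchell} first and recovers the lemma as a corollary, at the cost of invoking Hesselholt--Madsen where the paper needs only Suslin. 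One small quibble: ``$L_{T(n)}K\simeq L_{T(n)}TC$ on $p$-complete rings'' is not the standard packaging of the continuity step; what you actually need is that $K(-;\bZ_p)$ is invariant under $p$-adic completion of $p$-henselian rings, which does follow from the Clausen--Mathew--Morrow square combined with $p$-adic invariance of $THH$, but not via the identification you state.
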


\begin{proof} Consider the zig-zag of maps
\[
ku \rightarrow K\left(C\right) \xleftarrow{j^*} K\left(\sO_C\right).
\]
The maps above are all $p$-adic equivalences:
\begin{enumerate}
\item for $j^*$ this is \cite[Lemma 1.3.7]{hess-nik},
\item for the unlabeled arrow, we have Suslin rigidity \cite{Sus84}.
\end{enumerate}
Hence we conclude that $L_{T(n)}K\left(\sO_C\right) \simeq 0$. 
\end{proof}

\begin{rem} Note that \cite[Lemma 1.3.7]{hess-nik} only uses very basic facts about perfectoid rings (that we can choose a pseudouniformizer $\pi$ such that $\sO_C/\pi \simeq \sO_{C^{\flat}}/\pi^{\flat}$) and the fact that the positive homotopy groups of the $K$-theory of local perfect $\bF_p$-algebras are all $p$-divisible by \cite{hiller,kratzer}.
\end{rem}
%

\begin{proof}[Proof of Theorem~\ref{thm:baby-mitchell}] Since $K$-theory is a finitary invariant\footnote{In more details: $\sO_C$ is $p$-adically isomorphic to the colimit of the $\sO_E$'s and $K$-theory preserves $p$-adic equivalences in this setting. This follows from, for example, the fact that given a morphism of rings $A \rightarrow B$, $\mrm{fib}(GL(A) \rightarrow GL(B)) \simeq \mrm{fib}(M(A) \rightarrow M(B))$ and the formation of matrix rings evidently preserves local equivalences.}, we can write
\[
K\left(\sO_C;\bZ_p\right) \simeq \colim_{\bQ_p \subset E \subset C} K\left(\sO_E;\bZ_p\right),
\]
a colimit of $\bE_{\infty}$-rings, and the colimit ranges along finite extensions of $\bQ_p$ contained in $C$. Now, the source vanishes after applying $L_{T\left(n\right)}$, whence so is the target. Since $L_{T\left(n\right)}$ commutes with filtered colimits, we may find a finite extension $E$ of $\bQ_p$ such that $L_{T(n)}K\left(\sO_E\right) \simeq 0$ since the colimit in sight is computed in $\bE_{\infty}$-ring; indeed, vanishing is equivalent to the unit being nullhomotopic. Since the morphism of rings $\bZ_p \rightarrow \sO_E$ is finite flat, by the descent results of \cite{CMNN} we get that
\[
L_{T\left(n\right)}K(\bZ_p) \simeq \mrm{Tot}\left(L_{T\left(n\right)}K\left( \sO_E^{\otimes_{\bZ_p} \bullet+1}\right)\right).
\]
We are now done, since all terms of the limit on the right hand side are modules over $L_{T\left(n\right)}K\left(\sO_E\right)=0$.
\end{proof}

To conclude note that if $A$ is $T(n)$-acyclic, then it is also $K(n)$-acyclic. Though we will not need it, we can also reverse the implication if $A$ is, furthermore, an $\bE_{\infty}$-ring spectrum by  \cite[Lemma 2.3]{land-meier-tamme}.

 
 \begin{cor}\label{cor:mitchell-kn} If $n \geq 2$, we have that $L_{K\left(n\right)}K\left(\bZ_p\right) \simeq 0$. In particular $K(n)_*K\left( \bZ_p\right) = 0$.
 \end{cor}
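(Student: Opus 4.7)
The plan is to deduce the $K(n)$-local statement directly from the $T(n)$-local statement already established in Theorem~\ref{thm:baby-mitchell}. The key observation, which is essentially spelled out in the paragraph immediately preceding the corollary, is that any $K(n)$-local spectrum is automatically $T(n)$-local: this is because the telescope $T(n)$ is a $v_n$-telescope on a finite type $n$ complex, and tensoring with $K(n)$ factors as $K(n) \otimes (-) \simeq K(n) \otimes L_{T(n)}(-)$, so the Bousfield class of $K(n)$ is contained in that of $T(n)$. Consequently, $T(n)$-acyclic spectra are $K(n)$-acyclic.

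First, I would invoke Theorem~\ref{thm:baby-mitchell} to get $L_{T(n)} K(\bZ_p) \simeq 0$ for $n \geq 2$. Next, I would note that this means $K(\bZ_p)$ is $T(n)$-acyclic, hence also $K(n)$-acyclic by the above remark, giving $L_{K(n)} K(\bZ_p) \simeq 0$. Finally, the statement $K(n)_* K(\bZ_p) = 0$ is immediate since $K(n)_* X = \pi_* (K(n) \otimes X) = \pi_* (K(n) \otimes L_{K(n)} X) = 0$ whenever $L_{K(n)}X \simeq 0$.

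There is no real obstacle here: once Theorem~\ref{thm:baby-mitchell} is in hand, the corollary is a purely formal consequence of the (well-known) containment of Bousfield classes $\langle K(n) \rangle \leq \langle T(n) \rangle$. The entire content of the chromatic Mitchell theorem at $\bZ_p$ is already absorbed into the $T(n)$-local version, and the corollary only records the (easier) $K(n)$-local shadow.
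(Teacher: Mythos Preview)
Your proposal is correct and matches the paper's approach exactly: the paper states immediately before the corollary that $T(n)$-acyclic implies $K(n)$-acyclic, and the corollary is then an immediate consequence of Theorem~\ref{thm:baby-mitchell}. Your explanation of the Bousfield class containment $\langle K(n)\rangle \le \langle T(n)\rangle$ is precisely the justification the paper leaves implicit.
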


\ssec{Mitchell's theorem}
We now give a proof of Mitchell's theorem. We phrase this as. 

\begin{thm}\label{thm:mitchell-new} For all primes $p$ and $n \geq 2$, $L_{T(n)}K(\bZ) \simeq 0$. Equivalently, $L_{K(n)}K(\bZ) \simeq 0$ and, in particular, $K(n)_*K(\bZ) = 0$.
\end{thm}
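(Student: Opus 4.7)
The plan is to upgrade Theorem~\ref{thm:baby-mitchell} from $\bZ_p$ to $\bZ$ by invoking the étale hyperdescent for $L_{T(n)}K$ advertised in the introduction (as developed by Clausen--Mathew and Clausen--Mathew--Naumann--Noel). Since $L_{T(n)}K$ is an étale hypersheaf on qcqs schemes and $\Spec\bZ$ has finite virtual étale cohomological dimension, the vanishing $L_{T(n)}K(\bZ)\simeq 0$ reduces to the vanishing of $L_{T(n)}K$ on every étale stalk: namely on the strict henselizations $\bZ^{\mathrm{sh}}_{(\ell)}$ at each closed point $(\ell)$ (henselian DVRs with residue field $\bar\bF_\ell$) and on the separable closure $\bar\bQ$ at the generic point.

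For $\ell \neq p$, I would apply Suslin rigidity (available since $p$ is invertible in the residue field $\bar\bF_\ell$) to get $L_{T(n)}K(\bZ^{\mathrm{sh}}_{(\ell)})\simeq L_{T(n)}K(\bar\bF_\ell)$; the target vanishes for $n\geq 2$ by Quillen's computation of $K_*(\bar\bF_\ell)$, since $K(\bar\bF_\ell)^\wedge_p$ is an image-of-$J$-type $K(1)$-local spectrum. For the generic stalk, Suslin rigidity for algebraically closed fields of characteristic zero yields $L_{T(n)}K(\bar\bQ)\simeq L_{T(n)}K(\bC)\simeq L_{T(n)}KU$, which vanishes for $n\geq 2$ since $KU$ is $K(1)$-local. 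At the $(p)$-adic stalk I would either invoke a $p$-adic rigidity theorem à la Clausen--Mathew--Morrow (via trace methods) to reduce $L_{T(n)}K(\bZ^{\mathrm{sh}}_{(p)})$ to $L_{T(n)}K(\bar\bF_p)=0$, or adapt the descent argument of Theorem~\ref{thm:baby-mitchell} directly, using that $\bZ^{\mathrm{sh}}_{(p)}$ is the filtered colimit of henselizations of $\sO_K$ at primes above $p$ over finite extensions $K/\bQ$ (whose $p$-completions are finite unramified extensions of $\bZ_p$, covered by the same finite-flat descent used in the warm-up).

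The main obstacle I anticipate is precisely this $(p)$-adic stalk: classical Suslin rigidity is unavailable because the residue characteristic coincides with the prime of $T(n)$, so one is forced to invoke genuine $p$-adic machinery — trace methods via $TC$ together with the Clausen--Mathew--Morrow rigidity for henselian pairs, or a direct bootstrap from the $p$-completed version of baby Mitchell over unramified extensions. The remaining stalks are controlled by classical Suslin rigidity and Quillen's computations.
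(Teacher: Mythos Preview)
Your overall strategy---reduce to a stalkwise check via \'etale hyperdescent for $L_{T(n)}K$, then kill each stalk with rigidity---is exactly the paper's. But there is a real gap in the way you set it up: the Clausen--Mathew hyperdescent theorem you want to invoke is proved on the small \'etale site of a scheme on which $p$ is invertible. Its proof runs through Thomason's $L_{T(1)}K$ descent and the Bloch--Kato input that the fiber of $K^{\mrm{cn}}(-;\bZ_p)\to L_{T(1)}K$ is truncated, both of which are statements about mod-$p$ \'etale cohomology away from $p$. So you do not have \'etale hyperdescent for $L_{T(n)}K$ on $\Et_{\bZ}$, only on $\Et_{\bZ[1/p]}$. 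The obstacle you flag at the $(p)$-adic stalk is thus not only a rigidity problem; the very hyperdescent framework is unavailable there.

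The paper's fix is a one-line maneuver that dissolves both difficulties at once: before invoking hyperdescent, apply Quillen's localization/d\'evissage sequence
\[
K(\bF_p)\longrightarrow K(\bZ)\longrightarrow K\bigl(\bZ[\tfrac{1}{p}]\bigr),
\]
and note that $L_{T(n)}K(\bF_p)\simeq L_{T(n)}H\bZ_p\simeq 0$ for $n\ge 2$ by Quillen's computation. This reduces the problem to $\bZ[1/p]$, where Clausen--Mathew hyperdescent genuinely applies, and where every geometric point has residue characteristic prime to $p$, so Gabber rigidity plus Suslin rigidity handle \emph{all} stalks uniformly (your separate treatments of $\ell\neq p$ and of $\bar\bQ$ are then correct and suffice). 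In particular there is no need for Clausen--Mathew--Morrow rigidity or for bootstrapping from the $\bZ_p$ case; the $(p)$-adic stalk simply never appears.
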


\begin{proof}
    The ``equivalently" part follows from \cite[Lemma 2.3]{land-meier-tamme}. We first claim that that we can work with rings which are $\left(p\right)$-local. To see this, we claim that the map 
    \[
    L_{T\left(n\right)} K\left( \bZ \right) \rightarrow L_{T\left(n\right)}K\left(\bZ \left[\frac{1}{p} \right]\right),
    \]
    is an equivalence. 
    Indeed, by localization and d\'evissage \cite{quillen-k} we have a cofiber sequence
    \[
    K\left(\bF_p\right) \rightarrow K\left(\bZ\right) \rightarrow K\left(\bZ \left[\frac{1}{p} \right]\right),
    \]
    and $L_{T\left(n\right)}K(\bF_p) \simeq L_{T\left(n\right)}H\bZ_p \simeq 0$ where the first equivalence is \cite{quillen-finite} and the second equivalence follows since $n \geq 2$. Therefore, it suffices to show that $ {L_{T(n)}K\left(\bZ \left[\frac{1}{p} \right]\right) \simeq 0} $ which follows from the next two claims. 

%

\end{proof}

\begin{claim} \label{claim1}
	For any $n \geq 1$, the presheaf, 
	\[
	 L_{T(n)}K(-):\Et^{\op}_{\bZ \left[\frac{1}{p} \right]} \rightarrow \Spt
	 \] is a hypercomplete sheaf of spectra. 
\end{claim}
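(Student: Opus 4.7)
The plan is to deduce the claim from recently established hyperdescent technology for telescopically-localized algebraic $K$-theory, following the CMNN circle of ideas. The argument I would give proceeds in three steps.

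First, I would record that ordinary algebraic $K$-theory is a Nisnevich sheaf of spectra on qcqs $\bZ[\tfrac{1}{p}]$-schemes by Thomason-Trobaugh, and that this property is preserved by the exact, filtered-colimit-preserving functor $L_{T(n)}$. Hence $L_{T(n)}K$ is already a Nisnevich sheaf on $\Et_{\bZ[1/p]}$, and in particular a finitary presheaf.

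Second, to upgrade from Nisnevich to finite \'etale descent, I would invoke the Galois descent theorem of Clausen-Mathew-Naumann-Noel \cite{CMNN}: along any finite \'etale cover $Y \to X$ in $\Et_{\bZ[1/p]}$, the map $L_{T(n)}K(X) \to L_{T(n)}K(Y)$ is descendable, so $L_{T(n)}K$ satisfies descent along the \v Cech nerve. Combined with Nisnevich descent this yields \'etale descent on the truncated level.

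Third, to pass from \'etale descent to \'etale \emph{hyperdescent}, I would appeal to a bounded cohomological dimension argument. The \'etale site of $\bZ[\tfrac{1}{p}]$ has finite virtual mod-$\ell$ cohomological dimension for every prime $\ell$, and consequently the postnikov-completion spectral sequence converges strongly for sheaves of $T(n)$-local spectra. One then applies the hyperdescent machinery developed in the wake of \cite{CMNN} (as used repeatedly in the chromatic descent literature) to conclude that the \'etale sheaf $L_{T(n)}K$ is automatically hypercomplete on $\Et_{\bZ[1/p]}$.

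The main obstacle I expect is the third step: there is no formal implication ``descent $\Rightarrow$ hyperdescent'' for $T(n)$-local presheaves, and one genuinely needs the chromatic hyperdescent theorem and its hypotheses (bounded virtual cohomological dimension, plus the right finitariness) to apply cleanly in our setting. The first two steps are essentially citations; the substance is ensuring that $\Et_{\bZ[1/p]}$ fits into the hypothesis of that hyperdescent result.
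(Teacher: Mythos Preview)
Your Steps~1 and~2 are fine and essentially match what the paper (and \cite{clausen-mathew}) do to obtain ordinary \'etale descent for $L_{T(n)}K$. The gap is in Step~3, and it is exactly the gap you yourself flag as the obstacle: your proposed mechanism for passing from descent to hyperdescent does not work as stated.

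You write that bounded virtual mod-$\ell$ cohomological dimension makes ``the Postnikov-completion spectral sequence converge strongly for sheaves of $T(n)$-local spectra.'' This is not true: $T(n)$-local spectra for $n\geq 1$ are not bounded below, so their Postnikov towers do not converge, and a cohomological-dimension bound on the site only buys you hypercompleteness for presheaves that are bounded below (or at least have truncated homotopy sheaves in a suitable range). There is no direct ``bounded cd $\Rightarrow$ hyperdescent'' statement that applies to $L_{T(n)}K$ as a naked sheaf of spectra.

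The actual argument, which the paper unpacks from \cite[Theorem~7.14]{clausen-mathew}, is quite different in shape. One uses that hypercompletion is \emph{smashing} on the \'etale site \cite[Corollary~4.39]{clausen-mathew}: it therefore suffices to exhibit $L_{T(n)}K(-)$ as an $\bE_\infty$-algebra over some \'etale \emph{hyper}sheaf of $\bE_\infty$-rings. The candidate is $K^{\mrm{cn}}(-;\bZ_p)^{\et}$, and the real content is showing that \emph{this} is a hypersheaf. For that one compares to $L_{T(1)}K$, which is already known to be an \'etale hypersheaf by Thomason \cite{aktec} (odd $p$) and Rosenschon--{\O}stv{\ae}r \cite{roso} ($p=2$), and then one invokes the Rost--Voevodsky resolution of the Bloch--Kato conjectures to see that the fiber of $K^{\mrm{cn}}(-;\bZ_p)\to L_{T(1)}K$ is \emph{truncated}; only now does the bounded-cohomological-dimension argument apply, to conclude that the \'etale sheafification of this truncated fiber is Postnikov complete, hence hypercomplete. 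In short: the Quillen--Lichtenbaum/Bloch--Kato input is the essential ingredient you are missing, and the paper is explicit that the claim is not ``simple'' precisely because of this dependence.
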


\begin{proof} While this is an immediate application of \cite[Theorem 7.14]{clausen-mathew}, we will give a more detailed proof here to highlight the ingredients. Since telescopic localization is invariant under taking connective covers (\cite[Lemma 2.3(iii)]{land-meier-tamme}) we obtain a map of presheaves of $\bE_{\infty}$-rings:
\[K^{\mrm{cn}}\left(-;\bZ_p\right)\to L_{T\left(n\right)}K^{\mrm{cn}}\left(-;\bZ_p\right)\cong L_{T\left(n\right)}K\left(-\right)\]
Moreover $L_{T\left(n\right)}K(-)$ is an étale sheaf, thus this map factors through a canonical $\bE_{\infty}$-map
\[K^{\mrm{cn}}\left(-;\bZ_p\right)^{\et} \rightarrow L_{T\left(n\right)}K\left(-\right)\]
Since hypercompletion is smashing by \cite[Corollary 4.39]{clausen-mathew}, it suffices to then prove that $K^{\mrm{cn}}\left(-;\bZ_p\right)^{\et}$ is a hypersheaf on $\Et_{\bZ \left[\frac{1}{p} \right]}$. 

As is proved by Thomason in \cite[Theorem 4.1]{aktec} for odd primes (which relies on the Suslin-Merkerjuev theorem \cite{ms}) and Rosenschon and \O stv\ae r \cite{roso} for the prime $2$ (which does rely on the Milnor conjecture \cite{VV}), $L_{T(1)}K$ does satisfy \'etale hyperdescent. We consider the canonical map $K^{\mrm{cn}}\left(-;\bZ_p\right)^{\et} \rightarrow L_{T(1)}K$ and $\sF$ be the fiber. The claim then follows if one can show that the fiber $\sF$ has \'etale hyperdescent. 

Let $\sG$ denote the fiber of the map $K^{\mrm{cn}}(-;\bZ_p) \rightarrow L_{T(1)}K$. By the full strength of the Bloch-Kato conjectures \cite{Voevodsky:2008, VV} (see \cite[Theorem 6.18]{clausen-mathew}, noting that $\Spec \bZ $ admits the desired global bound by \cite[I.3.2]{Ser}) we see that the fiber is truncated. Therefore the sheafification, $\sF \simeq \sG^{\et}$ is Postnikov complete, whence is indeed hypercomplete as desired.

%

%


\end{proof}


\begin{claim}\label{claim2}
	For $n \geq 2$ and for all strictly hensel local ring $R$ with residue field $\kappa$ of characteristic $\ell > 0$ and $(p, \ell) = 1$
	\[
	L_{T\left( n \right)}K(R) \simeq 0
	\]  
\end{claim}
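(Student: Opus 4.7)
My plan is to reduce the vanishing to that of $L_{T(n)}ku$ via two successive applications of rigidity, and then invoke the fact that $ku$ has chromatic height $1$.

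First I would reduce to working $p$-completely. Since $T(n)$ is $p$-local for $n \geq 1$, we have $L_{T(n)}K(R) \simeq L_{T(n)}K(R;\bZ_p)$. Because $R$ is strictly henselian with residue field $\kappa$ of characteristic $\ell \neq p$, the prime $p$ does not lie in the maximal ideal of $R$ and is therefore a unit in $R$. So Gabber's rigidity theorem \cite{Gab92} applies to the henselian pair $(R, \mathfrak{m})$ and yields an equivalence
\[
K(R;\bZ_p) \isoto K(\kappa;\bZ_p).
\]
This reduces the claim to showing $L_{T(n)}K(\kappa) \simeq 0$ for $\kappa$ a separably closed field of characteristic $\ell \neq p$.

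Next I would use Suslin rigidity \cite{Sus83, Sus84}: for a separably closed field $\kappa$ of residue characteristic coprime to $p$, the $p$-completed $K$-theory is identified with connective topological $K$-theory, $K(\kappa;\bZ_p) \simeq ku^\wedge_p$. (If one prefers to apply Suslin only for algebraically closed fields, one can first pass along $\kappa \hookrightarrow \bar\kappa$, which induces a $p$-adic $K$-theory equivalence by rigidity applied to the henselization of this purely inseparable extension.) Together with the previous step this produces a map $ku^\wedge_p \to K(R;\bZ_p)$ which is an equivalence.

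Finally, $ku$ has chromatic height $1$: the map $ku \to KU$ is a $T(n)$-equivalence for $n \geq 1$, and $KU$ is $E(1)$-local, whence $L_{T(n)}ku \simeq 0$ for all $n \geq 2$. Combining the three steps gives $L_{T(n)}K(R) \simeq 0$, as required. I do not expect a serious obstacle here, beyond being careful to cite the right form of rigidity valid for separably (rather than algebraically) closed residue fields and for coefficients in $\bZ_p$ rather than $\bZ/p^k$; both are standard in the literature used throughout the rest of this note.
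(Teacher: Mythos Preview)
Your proposal is correct and follows essentially the same route as the paper: apply Gabber rigidity to pass from $R$ to $\kappa$, then Suslin rigidity to identify $K(\kappa;\bZ_p)$ with $ku^\wedge_p$, and finally use that $ku$ is $T(n)$-acyclic for $n\geq 2$. The only cosmetic difference is that the paper phrases the reductions via connective covers and invokes \cite[Lemma~2.3(iii)]{land-meier-tamme}, whereas you phrase them via $p$-completion; both are equivalent here.
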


Indeed, since strictly henselian local rings are the points in the {\'e}tale topology and vanishing of \'etale hypersheaves are detected on points  \cite[Remark 6.5.4.7]{LurHTT}, the claim implies our result.

\begin{proof}[Proof of Claim~\ref{claim2}] By Gabber rigidity \cite{Gab92}, the map $K^{\mrm{cn}}(R) \rightarrow K^{\mrm{cn}}(\kappa)$ is a $p$-adic equivalence, while by Suslin rigidity \cite{Sus84} we have a further $p$-adic equivalence $K^{\mrm{cn}}(\kappa) \leftarrow \mrm{ku}$.  Since telescopic localization for $ n \geq 1 $ is invariant under taking connective covers by \cite[Lemma 2.3(iii)]{land-meier-tamme} we conclude:
\begin{equation*} 
	L_{T\left( n \right)}K \left(R \right) \simeq L_{T\left( n \right)}K^{\mrm{cn}}\left(R\right) \simeq L_{T\left( n \right)} \mrm{ku} \simeq 0.
\end{equation*} 
\end{proof}


\begin{rem}Because of the appeal to \cite[Theorem 7.14]{clausen-mathew}, our proof is not ``simple", in contrast to the $p$-adic situation. This is because the Clausen-Mathew theorem depends on the resolution of the Quillen-Lichtenbaum conjectures. Specifically, the version of  \cite[Theorem 7.14]{clausen-mathew} that we need, requires \cite[Theorem 6.13]{clausen-mathew} which ultimately proves that the restriction of $K^{\et}$ to $\Et_{\bZ[1/p]}$ is a \'etale hypersheaf. This latter result, in turn, relies on  Rost-Voevodsky's resolution of the Bloch-Kato conjectures. Note that, in contrast to the $p$-adic situation, Theorem~\ref{thm:mitchell-new} asserts something global which prompts us to argue via stalks, whence an appeal to hyperdescent.
\end{rem}

\begin{rem}[Personal communication by J. Rognes] Morally speaking, our proof is a cleaner packaging of the following method to prove Mitchell's theorem using the Rost-Voevodsky results.
For simplicity let $p$ be an odd prime and let $V(1):= \1/(p, v_1)$ be a finite complex which admits a $v_2$-self map. Our goal is to prove that $K(\bZ) \otimes V(1)$ is a bounded complex which suffices to prove Mitchell's theorem since inverting a positive degree self-map on a bounded complex annihilates it. Using the localization sequence, we reduce to proving the following assertions:
\begin{enumerate}
\item if $\ell \not= p$, then $v_1$ acts invertibly on $K_*(\bF_{\ell})/p$ for $*$ large enough,
\item $v_1$ acts invertibly on $K_*(\bQ)/p$ for $*$ large enough, and
\item if $\ell = p$, then $K_*(\bF_p)/p$ is bounded above.
\end{enumerate}
The last statement follows from Quillen's computation \cite{quillen-finite} and the first statement follows by a further application of Suslin rigidity \cite{Sus83}. The second statements is where Rost-Voevodsky's resolution of the Bloch-Kato conjectures is needed \cite{VV,Voevodsky:2008} (this is where the ``overkill happens"), the motivic spectral sequence \cite{f-sus} and the fact that $v_1$ is detected by a periodicity operator on \'etale cohomology. 

\end{rem}

As a consequence of the main theorem we can easily obtain two corollaries:

\begin{cor} The functor $L_{T(n)}K|_{\Cat^{\perf}_{\bZ}}$ is zero for $n \geq 2$.
\end{cor}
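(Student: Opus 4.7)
The plan is to exploit the canonical $K(\bZ)$-module structure that algebraic $K$-theory carries on $\bZ$-linear categories. The functor $K\colon \Cat^{\perf}\to\Spt$ is lax symmetric monoidal, and its restriction to $\Cat^{\perf}_{\bZ}=\Mod_{\Perf(\bZ)}(\Cat^{\perf})$ therefore lifts through $\Mod_{K(\bZ)}(\Spt)$. Concretely, for every $\sC\in\Cat^{\perf}_{\bZ}$ the spectrum $K(\sC)$ is naturally a module over $K(\Perf(\bZ))=K(\bZ)$.

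Next I would apply the telescopic localization. Since $L_{T(n)}$ is smashing (for $n\geq 1$), the object $L_{T(n)}K(\sC)$ inherits the structure of a module over $L_{T(n)}K(\bZ)$. By Theorem~\ref{thm:mitchell-new} the latter $\bE_{\infty}$-ring vanishes for $n\geq 2$, and any module over the zero algebra is itself zero. This forces $L_{T(n)}K(\sC)\simeq 0$ for every $\sC\in\Cat^{\perf}_{\bZ}$, which is the desired vanishing of the functor.

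There is essentially no obstacle: the only inputs beyond Theorem~\ref{thm:mitchell-new} are the lax symmetric monoidal enhancement of $K$-theory and the smashing property of $L_{T(n)}$, both of which are standard. The same argument of course yields the analogous statement for $L_{K(n)}$ in place of $L_{T(n)}$.
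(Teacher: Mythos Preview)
Your approach is correct and is exactly what the paper has in mind; the corollary is stated without proof, but the identical mechanism (module/algebra structure over $L_{T(n)}K(\bZ)$) is made explicit in the proof of the very next corollary on $TC$.

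One inaccuracy worth flagging: $L_{T(n)}$ is \emph{not} smashing for $n\geq 1$. For instance, at height~$1$ one has $L_{T(1)}H\bZ\simeq 0$, whereas $L_{T(1)}\1\otimes H\bZ$ is nonzero (the $K(1)$-local sphere has nontrivial rational homotopy in degrees $0$ and $-1$). Fortunately your argument does not need smashing: what you actually use is that $L_{T(n)}$ is lax symmetric monoidal, which holds for any Bousfield localization since $T(n)$-acyclics form a $\otimes$-ideal. That alone guarantees $L_{T(n)}K(\sC)$ is a module over $L_{T(n)}K(\bZ)$, and the rest goes through. Just replace ``smashing'' by ``lax symmetric monoidal'' and the proof is clean.
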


\begin{cor}\label{thm:mitchell-tc} For $n \geq 2$, $L_{T\left(n\right)}TC\left(\bZ\right) \simeq 0$. Consequently, $L_{T\left(n\right)}TC|_{\Cat^{\perf}_{\bZ}}$ is zero for $n \geq 2$.
\end{cor}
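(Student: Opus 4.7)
The plan is to combine Theorem~\ref{thm:mitchell-new} with the equivalence between telescopically localized algebraic $K$-theory and topological cyclic homology. Specifically, the cyclotomic trace $K \to TC$ becomes a $T(n)$-local equivalence for $n \geq 1$ on connective $\bE_{1}$-rings (and, more generally, on $\bZ$-linear stable $\infty$-categories), by the purity results of Clausen--Mathew--Naumann--Noel and Land--Mathew--Meier--Tamme which sharpen the Dundas--Goodwillie--McCarthy fiber sequence. Applying this to $R = \bZ$ yields $L_{T(n)}TC(\bZ) \simeq L_{T(n)}K(\bZ)$, which vanishes for $n \geq 2$ by Theorem~\ref{thm:mitchell-new}. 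This disposes of the first statement.

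For the ``consequently" part there are two natural routes. The first is to invoke the same purity statement functorially, giving an equivalence $L_{T(n)}TC \simeq L_{T(n)}K$ as functors on $\Cat^{\perf}_{\bZ}$, and then appealing to the preceding corollary on $L_{T(n)}K|_{\Cat^{\perf}_{\bZ}}$. The second, which avoids pushing purity beyond rings, uses the fact that $TC$ is lax symmetric monoidal, so for any $C \in \Cat^{\perf}_{\bZ}$ the spectrum $L_{T(n)}TC(C)$ is naturally a module over the $\bE_{\infty}$-ring $L_{T(n)}TC(\bZ)$; vanishing of the latter forces vanishing of the former.

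The main obstacle is essentially citational: pinning down the exact reference for the $T(n)$-local purity of the cyclotomic trace at the level of generality required. For connective $\bE_{1}$-rings this is well-documented, so the first statement is unambiguous; the module-theoretic route then sidesteps the need to extend purity to arbitrary $\bZ$-linear stable $\infty$-categories for the second statement.
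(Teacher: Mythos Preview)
Your proposal works in spirit but takes an unnecessarily heavy route, and the key purity claim is misstated. The paper's proof is a single line and is exactly your ``second route'' applied one step earlier: the cyclotomic trace $K(\bZ)\to TC(\bZ)$ is a map of $\bE_\infty$-rings, so $L_{T(n)}TC(\bZ)$ is an $\bE_\infty$-algebra over $L_{T(n)}K(\bZ)$, and the latter vanishes for $n\ge 2$ by Theorem~\ref{thm:mitchell-new}. No comparison of $K$ and $TC$ beyond the bare algebra structure is needed, and the ``consequently'' clause then follows from the lax monoidality of $TC$ just as you say.

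Your primary route has a genuine issue. The assertion that the cyclotomic trace is a $T(n)$-local equivalence on all connective $\bE_1$-rings for $n\ge 1$ is false: take $n=1$ and $R=\bQ$ (or any ring in which $p$ is invertible). Then $THH(R)$ is an $R$-module, hence $p$-adically zero, so $L_{T(1)}TC(R)=0$, whereas $L_{T(1)}K(\bQ)\neq 0$. The references you invoke do not contain the statement you attribute to them: CMNN concerns descent for $L_{T(n)}K$, and LMMT's purity results concern the dependence of $L_{T(n)}K(R)$ on chromatic localizations of $R$, not a direct identification of $K$ with $TC$. For $n\ge 2$ and $R=\bZ$ the equivalence you want is of course true (both sides vanish), and one can extract it from Dundas--Goodwillie--McCarthy together with Mitchell-type input, but that makes the argument either circular or dependent on exactly the statement you are trying to prove. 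The module/algebra argument sidesteps all of this; you already had it in hand, just deployed at the wrong step.
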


\begin{proof} Via the trace map, $L_{T\left(n\right)}TC(\bZ)$ is an $L_{T\left(n\right)}K\left(\bZ\right)$-$\bE_{\infty}$-algebra, whence the result follows from Theorem~\ref{thm:mitchell-new}.
\end{proof}

\bibliographystyle{alphamod}

\let\mathbb=\mathbf

{\small
\bibliography{references}

\providecommand{\bysame}{\leavevmode\hbox to3em{\hrulefill}\thinspace}
\providecommand{\MR}{\relax\ifhmode\unskip\space\fi MR }
\providecommand{\MRhref}[2]{%
  \href{http://www.ams.org/mathscinet-getitem?mr=#1}{#2}
}
\providecommand{\href}[2]{#2}
\begin{thebibliography}{CMNN20}
\providecommand{\url}[1]{\href{#1}{{\def~{\textasciitilde}\tt #1}}}

\bibitem[AR02]{ausoni-rognes}
C.~Ausoni and J.~Rognes, \emph{Algebraic {$K$}-theory of topological
  {$K$}-theory}, Acta Math. \textbf{188} (2002), no.~1, pp.~1--39,
  \url{https://doi.org/10.1007/BF02392794}

\bibitem[Aus10]{ausoni}
C.~Ausoni, \emph{On the algebraic {$K$}-theory of the complex {$K$}-theory
  spectrum}, Invent. Math. \textbf{180} (2010), no.~3, pp.~611--668,
  \url{https://doi.org/10.1007/s00222-010-0239-x}

\bibitem[CM19]{clausen-mathew}
D.~Clausen and A.~Mathew, \emph{Hyperdescent and {\'e}tale K-theory}, 2019,
  \href{http://arxiv.org/abs/arXiv:1905.06611}{{\sf arXiv:1905.06611}}

\bibitem[CMNN20]{CMNN}
D.~Clausen, A.~Mathew, N.~Naumann, and J.~Noel, \emph{Descent in algebraic
  {$K$}-theory and a conjecture of {A}usoni-{R}ognes}, J. Eur. Math. Soc.
  (JEMS) \textbf{22} (2020), no.~4, pp.~1149--1200,
  \url{https://doi-org.ezp-prod1.hul.harvard.edu/10.4171/JEMS/942}

\bibitem[DFST82]{eventually}
W.~G. Dwyer, E.~M. Friedlander, V.~P. Snaith, and R.~W. Thomason,
  \emph{Algebraic {$K$}-theory eventually surjects onto topological
  {$K$}-theory}, Invent. Math. \textbf{66} (1982), no.~3, pp.~481--491,
  \url{http://dx.doi.org/10.1007/BF01389225}

\bibitem[FS02]{f-sus}
E.~M. Friedlander and A.~Suslin, \emph{The spectral sequence relating algebraic
  {$K$}-theory to motivic cohomology}, Ann. Sci. \'{E}cole Norm. Sup. (4)
  \textbf{35} (2002), no.~6, pp.~773--875,
  \url{https://doi.org/10.1016/S0012-9593(02)01109-6}

\bibitem[Gab92]{Gab92}
O.~Gabber, \emph{{$K$}-theory of {H}enselian local rings and {H}enselian
  pairs}, Algebraic {$K$}-theory, commutative algebra, and algebraic geometry
  ({S}anta {M}argherita {L}igure, 1989), Contemp. Math., vol. 126, Amer. Math.
  Soc., Providence, RI, 1992, pp.~59--70,
  \url{https://doi-org.ezp-prod1.hul.harvard.edu/10.1090/conm/126/00509}

\bibitem[Hil81]{hiller}
H.~L. Hiller, \emph{{$\lambda $}-rings and algebraic {$K$}-theory}, J. Pure
  Appl. Algebra \textbf{20} (1981), no.~3, pp.~241--266,
  \url{https://doi.org/10.1016/0022-4049(81)90062-1}

\bibitem[HN19]{hess-nik}
L.~Hesselholt and T.~Nikolaus, \emph{Topological cyclic homology}, Handbook of
  homotopy theory (Haynes Miller, ed.), CRC Press/Chapman and Hall, 2019

\bibitem[Kra80]{kratzer}
C.~Kratzer, \emph{{$\lambda $}-structure en {$K$}-th\'{e}orie alg\'{e}brique},
  Comment. Math. Helv. \textbf{55} (1980), no.~2, pp.~233--254,
  \url{https://doi.org/10.1007/BF02566684}

\bibitem[LMT20]{land-meier-tamme}
M.~Land, L.~Meier, and G.~Tamme, \emph{Vanishing results for chromatic
  localizations of algebraic $K$-theory}, 2020,
  \href{http://arxiv.org/abs/arXiv:2001.10425}{{\sf arXiv:2001.10425}}

\bibitem[Lur09]{LurHTT}
J.~Lurie, \emph{Higher Topos Theory}, Annals of Mathematics Studies, vol. 170,
  Princeton University Press, Princeton, NJ, 2009

\bibitem[Mit87]{Mit90}
S.~A. Mitchell, \emph{The {M}orava {$K$}-theory of algebraic {$K$}-theory
  spectra}, $K$-Theory \textbf{1} (1987), no.~2, pp.~197--205,
  \url{https://doi.org/10.1007/BF00533419}

\bibitem[MS82]{ms}
A.~S. Merkurjev and A.~A. Suslin, \emph{{$K$}-cohomology of {S}everi-{B}rauer
  varieties and the norm residue homomorphism}, Izv. Akad. Nauk SSSR Ser. Mat.
  \textbf{46} (1982), no.~5, pp.~1011--1046, 1135--1136

\bibitem[Qui72]{quillen-finite}
D.~Quillen, \emph{On the cohomology and {$K$}-theory of the general linear
  groups over a finite field}, Ann. of Math. (2) \textbf{96} (1972),
  pp.~552--586,  \url{https://doi.org/10.2307/1970825}

\bibitem[Qui10]{quillen-k}
\bysame, \emph{Higher algebraic {$K$}-theory: {I}}, pp.~413--478

\bibitem[Rog14]{rognes-msri}
J.~Rognes, \emph{Chromatic redshift}, 2014,
  \href{http://arxiv.org/abs/arXiv:1403.4838}{{\sf arXiv:1403.4838}}

\bibitem[ROsr05]{roso}
A.~Rosenschon and P.~A. \O~stv\ae r, \emph{The homotopy limit problem for
  two-primary algebraic {$K$}-theory}, Topology \textbf{44} (2005), no.~6,
  pp.~1159--1179,  \url{https://doi.org/10.1016/j.top.2005.04.004}

\bibitem[Ser02]{Ser}
J.-P. Serre, \emph{Galois cohomology}, Springer-Verlag, 2002

\bibitem[Sus83]{Sus83}
A.~Suslin, \emph{On the {$K$}-theory of algebraically closed fields}, Invent.
  Math. \textbf{73} (1983), no.~2, pp.~241--245,
  \url{https://doi-org.ezp-prod1.hul.harvard.edu/10.1007/BF01394024}

\bibitem[Sus84]{Sus84}
A.~A. Suslin, \emph{On the {$K$}-theory of local fields}, Proceedings of the
  {L}uminy conference on algebraic {$K$}-theory ({L}uminy, 1983), vol.~34,
  1984, pp.~301--318,
  \url{https://doi-org.ezp-prod1.hul.harvard.edu/10.1016/0022-4049(84)90043-4}

\bibitem[Tho85]{aktec}
R.~W. Thomason, \emph{Algebraic {$K$}-theory and \'etale cohomology}, Ann. Sci.
  \'Ecole Norm. Sup. (4) \textbf{18} (1985), no.~3, pp.~437--552,
  \url{http://www.numdam.org/item?id=ASENS_1985_4_18_3_437_0}

\bibitem[Voe03]{VV}
V.~Voevodsky, \emph{Motivic cohomology with $\mathbb{Z}/2$-coefficients}, Publ.
  Math. I.H.{\'E}.S. \textbf{98} (2003), no.~1, pp.~59--104

\bibitem[Voe11]{Voevodsky:2008}
\bysame, \emph{On motivic cohomology with $\mathbb{Z}/l$-coefficients}, Ann.
  Math. \textbf{174} (2011), no.~1, pp.~401--438, preprint
  \href{http://arxiv.org/abs/0805.4430}{{\sf arXiv:0805.4430}}

\end{thebibliography}
}

\parskip 0pt

\end{document}